\documentclass{amsart}

\usepackage{amsmath,amssymb,mathtools}
\usepackage{microtype}
\usepackage{needspace}
\usepackage[hidelinks]{hyperref}
\hypersetup{pdftitle={Maximal shifts below the Taylor bound},
  pdfauthor={Abed Abedelfatah},
  pdfsubject={Maximal shifts of monomial ideals}}

\numberwithin{equation}{section}
\allowdisplaybreaks

\newtheorem{theorem}{Theorem}[section]
\newtheorem{corollary}[theorem]{Corollary}
\newtheorem{lemma}[theorem]{Lemma}
\newtheorem{proposition}[theorem]{Proposition}
\theoremstyle{remark}

\newtheorem{example}[theorem]{Example}
\newtheorem{question}[theorem]{Question}

\newcommand{\kk}{K}
\newcommand{\pd}{\operatorname{pd}}
\newcommand{\reg}{\operatorname{reg}}
\newcommand{\NN}{\mathcal{N}}
\newcommand{\Dcal}{\mathcal{D}}

\title[Maximal shifts below the Taylor bound]{Maximal shifts below the Taylor bound}

\author{Abed Abedelfatah}
\address{Department of Mathematics, Braude College of Engineering, 2161002 Karmiel, Israel}
\email{abed@braude.ac.il}

\subjclass[2020]{Primary 13D02; Secondary 13F55, 05E40, 55U10}
\keywords{Maximal shift, Taylor resolution, monomial ideal, Stanley--Reisner ring, simplicial complex, Mayer--Vietoris sequence, Castelnuovo--Mumford regularity, subadditivity}

\begin{document}

\begin{abstract}
Let $I\subseteq S$ be a monomial ideal with minimal generator degrees
between $e$ and $d$, where $2\leq e\leq d$.  We prove that if
$t_a(S/I)<ea$, then

$$
 t_{a+b}(S/I)\leq t_a(S/I)+
 \left\lceil\frac{(2d-1)b}{2}\right\rceil
$$

for $a\geq2$, $b\geq0$, and $a+b\leq\pd_S(S/I)$.
Consequently,

$$
 t_{a+b}(S/I)\leq ae+db-1-\left\lfloor\frac b2\right\rfloor.
$$

In particular, if $t_2(S/I)<2e$, then

$$
 t_i(S/I)\leq
 \left\lceil\frac{(2d-1)i}{2}\right\rceil-2(d-e)
$$

for $2\leq i\leq\pd_S(S/I)$.
We also obtain a regularity bound and give a quadratic family in which
our estimate for the last shift is smaller than every bound obtained
from ordinary subadditivity.  The constant $2d-1$ is sharp for $d=2$.
\end{abstract}

\maketitle

\section{Introduction}

Let $S=\kk[x_1,\ldots,x_n]$ be a standard graded polynomial ring over a
field $\kk$, and set $R=S/I$ and $p=\pd_S R$.  The maximal shifts are
\[
 t_i(R)=\max\{j:\beta^S_{i,j}(R)\neq0\}\qquad(0\leq i\leq p),
\]
where $\beta^S_{i,j}(R)$ are the graded Betti numbers and $t_0(R)=0$.
If the minimal generators of a monomial ideal have degree at most $d$,
the Taylor resolution gives
\begin{equation}\label{eq:Taylor-bound-intro}
 t_i(R)\leq di.
\end{equation}

Suppose now that the generator degrees lie between $e$ and $d$.
We study how the inequality $t_a<ea$ affects the later maximal shifts.
The next theorem gives a bound for all later shifts.

\begin{theorem}\label{thm:main}
Let $I\subseteq S$ be a monomial ideal with minimal generator degrees
between $e$ and $d$, where $2\leq e\leq d$.  If $t_a(S/I)<ea$ for some
$2\leq a\leq\pd_S(S/I)$, then
\begin{equation}\label{eq:main-intro}
 t_{a+b}(S/I)\leq t_a(S/I)+
 \left\lceil\frac{(2d-1)b}{2}\right\rceil
\end{equation}
for every $b\geq0$ with $a+b\leq\pd_S(S/I)$.
\end{theorem}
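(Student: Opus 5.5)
By induction on $b$, it suffices to prove the two-step inequality
\[
 t_{i+2}(S/I)\leq t_i(S/I)+2d-1 \qquad\text{whenever } t_i(S/I)<ei,
\]
together with the one-step inequality $t_{i+1}(S/I)\le t_i(S/I)+d$, which holds for every monomial ideal. Indeed, writing $b=2q+r$ with $r\in\{0,1\}$, iterating the two-step bound $q$ times and then using one extra step when $r=1$ gives $t_{a+b}\le t_a+q(2d-1)+rd=t_a+\lceil(2d-1)b/2\rceil$.

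To iterate, the hypothesis must propagate. If $t_i<ei$, then $t_{i+2}\le t_i+2d-1$. Propagation, however, needs $t_{i+2}<e(i+2)$, and this fails when $d>e$. So the induction should instead carry the invariant ``$t_{a+2k}\le t_a+k(2d-1)$'' directly. The two-step lemma must therefore be proved in a form that only requires some index $j\le i$ with $t_j<ej$. Even better, I would prove a form that uses the structure directly: there is no Taylor face of maximal type.

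\textbf{Setup.} I would use the lcm-lattice/Hochster-type description, in which $\beta_{i,m}(S/I)$ is computed by the reduced homology of the open interval $(\hat0,m)$ in the lcm lattice. Equivalently, I would use the upper Koszul complex $K^m=\{F\subseteq \mathrm{supp}: x^{m}/x^F\in I\}$, with $\beta_{i,m}=\dim\tilde H_{i-2}(K^m)$. The one-step bound $t_{i+1}\le t_i+d$ comes from the Mayer--Vietoris/linear-quotient argument: a nonzero $\beta_{i+1,m}$ forces $\beta_{i,m'}\ne0$ for some $m'\mid m$ with $\deg m-\deg m'\le d$. I would take this as a known subadditivity-type lemma, obtained by splitting off one generator $u$ via $0\to S/(I:u)(-\deg u)\to S/I'\to S/I\to0$.

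\textbf{Main step.} This is the improvement of $2d$ to $2d-1$ under $t_a<ea$. The idea is as follows. Equality $t_{i+2}=t_i+2d$ would require a multidegree $m$ with $\beta_{i+2,m}\ne0$ obtained from some $m''$ with $\beta_{i,m''}\ne0$ by adding two generators of degree $d$ with disjoint new support. Iterating this backwards down to homological degree $a$ (or to $2$), one shows that the homology is then that of a join/Taylor-like piece. Since all generators have degree $\ge e$ and each step adds at least $e$, this forces $t_a\ge ea$, a contradiction. Concretely, I would use Mayer--Vietoris on the complex $\Delta$ restricted to the multidegree $m$, cutting along two generators $u,v$. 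The extremal case forces $\gcd(u,v)=1$ and $\deg u=\deg v=d$, and the link (colon) ideal keeps full Taylor-type degree. By tracking $t_a$ through the colon ideals $I:u$, whose generators still have degree $\ge e$ after removing the appropriate variables, I would then get the contradiction.

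\textbf{Main obstacle.} I expect the main obstacle to be making the ``extremal configuration propagates back to index $a$'' argument rigorous. Specifically, I need to show that if the $2d$ jump were achieved at stage $i\ge a$, then every earlier stage is also extremal, yielding $t_a=ea$. My first attempt would be to phrase everything via colon ideals $I:u$ and induct on the number of generators, using the inequality $t_j(S/(I:u))+\deg u\ge t_{j+1}$ at extremal degrees.
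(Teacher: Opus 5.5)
Your reduction is sound. The one-step bound $t_{i+1}\le t_i+d$, together with a two-step bound $t_r\le t_{r-2}+2d-1$ valid for every $r\ge a+2$ under the single hypothesis $t_a<ea$, gives the theorem. You also correctly see that the hypothesis does not propagate, so the two-step bound must use $t_a<ea$ directly.

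The gap is that the two-step bound is the whole content of the theorem, and your main step does not prove it. The mechanism you propose fails. After splitting off a generator $u$, the colon ideal $I':u$ is generated by the monomials $v/\gcd(u,v)$, and these can have degree $1$: for $I=(x_1x_2,x_2x_3)$ and $u=x_1x_2$ one gets $(x_2x_3):u=(x_3)$. So neither the lower bound $e$ on generator degrees nor the hypothesis $t_a<ea$ passes to colon ideals, and ``tracking $t_a$'' through them has nothing to hold onto. Likewise, ``the extremal case forces $\gcd(u,v)=1$'' is only asserted. You give no Mayer--Vietoris cover that actually produces a drop of at most $2d-1$ when $\gcd(u,v)\neq1$, and you do not say which pair $u,v$ is used. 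To reach a contradiction you need that conclusion for \emph{every} pair of generators dividing the top multidegree.

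The missing idea is a dichotomy at the support of the top Betti number, with no backward iteration. After polarization, take $W$ with $|W|=t_r$ and $\widetilde H_{|W|-r-1}(\Delta[W])\neq0$, and set $\Gamma=\Delta[W]$; then $\pd\kk[\Gamma]\ge r\ge a$.

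If the minimal nonfaces of $\Gamma$ are pairwise disjoint, there are at least $a$ of them, since the Taylor length bounds $\pd$. The union $U$ of any $a$ of them gives $\kk[\Delta[U]]$ as a complete intersection. Its top Koszul degree, via Hochster's formula, yields $\beta_{a,|U|}(\kk[\Delta])\neq0$ with $|U|\ge ae$, contradicting $t_a<ea$.

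Otherwise, pick intersecting minimal nonfaces $F,G$ and set $C=F\cap G$, $A=F\setminus G$, $B=G\setminus F$. Use the cover $\Gamma=\Dcal_\Gamma(C)\cup\Dcal_\Gamma(A,B)$, which works because a face containing $C$ must miss a vertex of $A$ and a vertex of $B$; the intersection of the two pieces is $\Dcal_\Gamma(C,A,B)$. Mayer--Vietoris, plus an iterated Mayer--Vietoris lemma converting homology of these deletion unions into homology of $\Delta[W\setminus Q]$, gives one of two outcomes:
\begin{itemize}
\item $\beta_{r-1,|W|-|Q|}\neq0$ with $Q\subseteq C$, where $|C|\le d-1$;
\item $\beta_{r-2,|W|-|Q|}\neq0$ with $Q\subseteq F\cup G$, where $|F\cup G|\le 2d-1$.
\end{itemize}
Combined with the one-step bound, this is $t_r\le t_{r-2}+2d-1$.

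What carries the hypothesis from index $a$ to index $r$ is restriction to induced subcomplexes of the top support. It is not extremality of the earlier jumps.
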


Since $t_a\leq ae-1$, Theorem~\ref{thm:main} gives
\begin{equation}\label{eq:mixed-intro}
 t_{a+b}\leq ae+db-1-\left\lfloor\frac b2\right\rfloor.
\end{equation}
Thus the assumption depends on the lower degree bound $e$, while the
bounds for the later shifts depend on the upper degree bound $d$.
In particular, if $t_2<2e$, then
\begin{equation}\label{eq:second-intro}
 t_i\leq
 \left\lceil\frac{(2d-1)i}{2}\right\rceil-2(d-e)
 \qquad(2\leq i\leq p).
\end{equation}
For $e=d$, these bounds describe the growth of the defect $di-t_i$.
For $e=d=2$, Theorem~\ref{thm:main} recovers
\cite[Proposition~3.2]{Abed2024}.

The proof is combinatorial.  After polarization, the relevant induced
complexes contain two intersecting minimal nonfaces.  Mayer--Vietoris then
gives
\[
 t_r\leq t_{r-2}+2d-1.
\]
Iterating this inequality proves the theorem.

Ordinary subadditivity
\[
 t_{u+v}(R)\leq t_u(R)+t_v(R)
\]
holds for monomial ideals \cite{ABHMW}.  Our proof is independent of this
result.  Example~\ref{ex:sharp-quadratic} gives a quadratic family for
which our bound on the last shift is smaller than every bound obtained by
repeated use of ordinary subadditivity.
Earlier results on subadditivity for monomial and edge ideals include
\cite{AbedNevo2017,FaridiShahada,JayanthanKumar}.

\section{Deletion covers}

Let $\Delta$ be a simplicial complex on $[n]$, and let $\NN(\Delta)$ be
its set of minimal nonfaces: nonfaces whose proper subsets are faces.
Write
\[
 I_\Delta=(x_F:F\in\NN(\Delta)),\qquad
 x_F=\prod_{v\in F}x_v,\qquad \kk[\Delta]=S/I_\Delta.
\]
For $W\subseteq[n]$, the induced subcomplex is
$\Delta[W]=\{\sigma\in\Delta:\sigma\subseteq W\}$.
Its minimal nonfaces are the members of $\NN(\Delta)$ contained in $W$.
We compute $\pd\kk[\Delta[W]]$ over $\kk[x_v:v\in W]$.

All homology is reduced, with coefficients in $\kk$; we use
$\widetilde H_{-1}(\{\emptyset\})=\kk$ and $\widetilde H_q=0$ for $q<-1$.
Hochster's formula \cite{Hochster} is
\begin{equation}\label{eq:Hochster}
 \beta^S_{i,j}(\kk[\Delta])
 =\sum_{\substack{W\subseteq[n]\\|W|=j}}
 \dim_\kk\widetilde H_{j-i-1}(\Delta[W])
\end{equation}

The next lemma follows from \cite[Lemma~3.1]{Abed2022}.

\begin{lemma}\label{lem:iterated-MV}
Let $\Gamma=\Gamma_1\cup\cdots\cup\Gamma_r$ be a union of subcomplexes.
If $\widetilde H_q(\Gamma)\neq0$, then there is a nonempty
$J\subseteq[r]$ such that
\[
 \widetilde H_{q-|J|+1}
 \left(\bigcap_{j\in J}\Gamma_j\right)\neq0.
\]
\end{lemma}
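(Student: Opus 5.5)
The plan is an induction on $r$ using the Mayer--Vietoris spectral sequence, or equivalently the long exact sequence, of a cover. The base case $r=1$ is immediate: $J=\{1\}$ gives $\widetilde H_q(\Gamma_1)\neq0$, since $\Gamma=\Gamma_1$.

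For the inductive step, write $\Gamma=A\cup B$ with $A=\Gamma_1\cup\cdots\cup\Gamma_{r-1}$ and $B=\Gamma_r$. The Mayer--Vietoris sequence
\[
 \widetilde H_q(A)\oplus\widetilde H_q(B)\to\widetilde H_q(\Gamma)\to\widetilde H_{q-1}(A\cap B)
\]
is exact, so $\widetilde H_q(\Gamma)\neq0$ forces one of the following:
\begin{itemize}
\item $\widetilde H_q(B)\neq0$, and then $J=\{r\}$ works;
\item $\widetilde H_q(A)\neq0$, and then induction applied to $A$ gives $J\subseteq[r-1]$;
\item $\widetilde H_{q-1}(A\cap B)\neq0$.
\end{itemize}
In the third case, note that
\[
 A\cap B=\bigcup_{j<r}(\Gamma_j\cap\Gamma_r)
\]
is a union of $r-1$ subcomplexes. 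The inductive hypothesis, applied with $q-1$, yields a nonempty $J'\subseteq[r-1]$ with
\[
 \widetilde H_{q-1-|J'|+1}\Bigl(\bigcap_{j\in J'}\Gamma_j\cap\Gamma_r\Bigr)\neq0.
\]
Setting $J=J'\cup\{r\}$, so that $|J|=|J'|+1$, gives exactly $\widetilde H_{q-|J|+1}\bigl(\bigcap_{j\in J}\Gamma_j\bigr)\neq0$.

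The only delicate point is reduced homology with empty intersections. If $A\cap B=\{\emptyset\}$, the Mayer--Vietoris sequence for reduced homology remains valid with the paper's convention $\widetilde H_{-1}(\{\emptyset\})=\kk$, and the statement then produces $\widetilde H_{-1}$ of the empty complex, which is consistent. One must also make sure that the void complex (no faces at all) never arises; since all the $\Gamma_j$ contain $\emptyset$, every intersection is at least $\{\emptyset\}$. With this convention the augmented chain complexes give a short exact sequence $0\to C(A\cap B)\to C(A)\oplus C(B)\to C(A\cup B)\to0$ in all degrees $\geq -1$, so the argument goes through uniformly. I expect this bookkeeping to be the only obstacle. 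Alternatively, the lemma can be quoted directly as \cite[Lemma~3.1]{Abed2022}.
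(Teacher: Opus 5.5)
Your proof is correct. The Mayer--Vietoris induction on $r$ splits off $\Gamma_r$ and uses $A\cap B=\bigcup_{j<r}(\Gamma_j\cap\Gamma_r)$ to apply the inductive hypothesis in degree $q-1$; this is the standard argument behind \cite[Lemma~3.1]{Abed2022}, which the paper simply cites without reproducing. Your check that every $\Gamma_j$ contains $\emptyset$, so no intersection is void and the augmented sequence is exact in degree $-1$, is the right bookkeeping for the paper's convention.
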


Let $\Gamma$ be a simplicial complex on $W$, and let
$E_1,\ldots,E_s\subseteq W$ be pairwise disjoint nonempty sets.  Put
\begin{equation}\label{eq:D-definition}
 \Dcal_\Gamma(E_1,\ldots,E_s)
 =\bigcup_{v_1\in E_1,\ldots,v_s\in E_s}
 \Gamma[W\setminus\{v_1,\ldots,v_s\}].
\end{equation}
Its faces omit a vertex from each $E_i$

\begin{lemma}\label{lem:grouped-deletions}
If $\widetilde H_q(\Dcal_\Gamma(E_1,\ldots,E_s))\neq0$, then there are
nonempty $Q_i\subseteq E_i$ such that, for
$Q=Q_1\cup\cdots\cup Q_s$,
\begin{equation}\label{eq:grouped-conclusion}
 \widetilde H_{q-|Q|+s}(\Gamma[W\setminus Q])\neq0.
\end{equation}
\end{lemma}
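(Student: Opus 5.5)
The plan is to induct on $s$, peeling off one group $E_s$ at a time. Each step is a single application of Lemma~\ref{lem:iterated-MV}, with the cover indexed by the vertices of $E_s$ rather than by all tuples $(v_1,\ldots,v_s)$. Indexing by tuples would not work directly: the intersection of the pieces indexed by a set $J$ of tuples is $\Gamma[W\setminus Q]$, where $Q$ is the set of all vertices occurring in the tuples, but the homological shift $|J|-1$ need not equal $|Q|-s$. Handling one group at a time produces the shift $|Q_i|-1$ for each group separately, and these add up to exactly $|Q|-s$.

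The base case is $s=0$, where $\Dcal_\Gamma()=\Gamma$ and $Q=\emptyset$, so the claim is trivial. (The case $s=1$ is also immediate from Lemma~\ref{lem:iterated-MV}.) For the inductive step, write
\[
 \Dcal_\Gamma(E_1,\ldots,E_s)=\bigcup_{v\in E_s}\Dcal_{\Gamma[W\setminus\{v\}]}(E_1,\ldots,E_{s-1}),
\]
where the $v$-th piece is regarded as a complex on $W\setminus\{v\}$. This makes sense because $E_1,\ldots,E_{s-1}$ are disjoint from $E_s$ and hence lie in $W\setminus\{v\}$. The identity holds by regrouping the union in \eqref{eq:D-definition} according to the choice of $v_s$.

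The key point, and the only place needing care, is the intersection formula. For nonempty $J\subseteq E_s$ I claim
\[
 \bigcap_{v\in J}\Dcal_{\Gamma[W\setminus\{v\}]}(E_1,\ldots,E_{s-1})=\Dcal_{\Gamma[W\setminus J]}(E_1,\ldots,E_{s-1}).
\]
A face $\sigma$ lies in the left-hand side exactly when three things hold: $\sigma\in\Gamma$, $\sigma\cap J=\emptyset$, and $\sigma$ misses at least one vertex of each $E_i$ for $i<s$. The last condition does not depend on $v$. These three conditions also characterize the right-hand side, again because $E_i\cap J=\emptyset$ for $i<s$.

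Lemma~\ref{lem:iterated-MV} then produces a nonempty $Q_s\subseteq E_s$ with
\[
 \widetilde H_{q-|Q_s|+1}\bigl(\Dcal_{\Gamma[W\setminus Q_s]}(E_1,\ldots,E_{s-1})\bigr)\neq0.
\]
Applying the induction hypothesis to $\Gamma[W\setminus Q_s]$, with vertex set $W\setminus Q_s$ and $s-1$ groups, gives nonempty $Q_i\subseteq E_i$ for $i<s$. Put $Q'=Q_1\cup\cdots\cup Q_{s-1}$. Using $\Gamma[W\setminus Q_s][(W\setminus Q_s)\setminus Q']=\Gamma[W\setminus Q]$, the induction hypothesis yields
\[
 \widetilde H_{q-|Q_s|+1-|Q'|+(s-1)}(\Gamma[W\setminus Q])\neq0.
\]
Since the groups are disjoint, $|Q|=|Q_s|+|Q'|$, so the index equals $q-|Q|+s$. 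This is \eqref{eq:grouped-conclusion}.
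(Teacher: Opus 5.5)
Your proof is correct and is essentially the paper's argument. Both induct on $s$, apply Lemma~\ref{lem:iterated-MV} to the cover indexed by the vertices of a single group, use disjointness to identify the intersection as $\Dcal_{\Gamma[W\setminus Q_i]}$ of the remaining groups, and add the shifts $|Q_i|-1$ to get $|Q|-s$. The differences are cosmetic: you peel off $E_s$ rather than $E_1$, start the induction at $s=0$, and verify face by face the intersection formula that the paper states in one line.
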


\begin{proof}
We use induction on $s$.

Suppose first that $s=1$.  By definition,
\[
 \Dcal_\Gamma(E_1)
 =
 \bigcup_{v\in E_1}\Gamma[W\setminus\{v\}].
\]
Apply Lemma~\ref{lem:iterated-MV} to this cover.  Since
$\widetilde H_q(\Dcal_\Gamma(E_1))\neq0$, there is a nonempty
$Q_1\subseteq E_1$ such that
\[
 \widetilde H_{q-|Q_1|+1}
 \left(
 \bigcap_{v\in Q_1}\Gamma[W\setminus\{v\}]
 \right)\neq0.
\]
But
\[
 \bigcap_{v\in Q_1}\Gamma[W\setminus\{v\}]
 =
 \Gamma[W\setminus Q_1].
\]
This proves the statement for $s=1$.

Now let $s>1$ and assume the result holds for $s-1$ groups.
For each $v\in E_1$, set
\[
 \Lambda_v=
 \Dcal_{\Gamma[W\setminus\{v\}]}(E_2,\ldots,E_s).
\]
Then
\[
 \Dcal_\Gamma(E_1,\ldots,E_s)
 =
 \bigcup_{v\in E_1}\Lambda_v.
\]
Indeed, a face in $\Dcal_\Gamma(E_1,\ldots,E_s)$ omits some
$v\in E_1$ and also omits at least one vertex from each of
$E_2,\ldots,E_s$.

By Lemma~\ref{lem:iterated-MV}, there is a nonempty
$Q_1\subseteq E_1$ such that
\[
 \widetilde H_{q-|Q_1|+1}
 \left(\bigcap_{v\in Q_1}\Lambda_v\right)\neq0.
\]
Since the sets $E_1,\ldots,E_s$ are pairwise disjoint,
deleting the vertices of $Q_1$ does not change any of
$E_2,\ldots,E_s$.  Hence
\[
 \bigcap_{v\in Q_1}\Lambda_v
 =
 \Dcal_{\Gamma[W\setminus Q_1]}(E_2,\ldots,E_s).
\]
Therefore
\[
 \widetilde H_{q-|Q_1|+1}
 \bigl(
 \Dcal_{\Gamma[W\setminus Q_1]}(E_2,\ldots,E_s)
 \bigr)\neq0.
\]

Apply the induction hypothesis to
$\Gamma[W\setminus Q_1]$ and the $s-1$ sets
$E_2,\ldots,E_s$.  There are nonempty
$Q_i\subseteq E_i$ for $2\leq i\leq s$ such that, if
\[
 Q'=Q_2\cup\cdots\cup Q_s,
\]
then
\[
 \widetilde H_{
 q-|Q_1|+1-|Q'|+(s-1)}
 \bigl(
 \Gamma[(W\setminus Q_1)\setminus Q']
 \bigr)\neq0.
\]
Since the $Q_i$ are disjoint,
\[
 |Q|=|Q_1|+|Q'|,
 \qquad
 Q=Q_1\cup Q',
\]
and
\[
 (W\setminus Q_1)\setminus Q'=W\setminus Q.
\]
Thus the last nonzero homology group is
\[
 \widetilde H_{q-|Q|+s}(\Gamma[W\setminus Q])
\]
as required.
\end{proof}

\section{The squarefree case}

We use the following bound from \cite[Corollary~4]{HS}.

\begin{lemma}\label{lem:one-step}
Let $R=\kk[\Delta]$.  If every minimal nonface of $\Delta$ has size at
most $d$, then
\begin{equation}\label{eq:one-step}
 t_r(R)\leq t_{r-1}(R)+d\qquad(1\leq r\leq\pd R).
\end{equation}
\end{lemma}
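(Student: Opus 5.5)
Although the paper cites this bound from \cite[Corollary~4]{HS}, it can also be derived directly from Hochster's formula \eqref{eq:Hochster} together with Lemma~\ref{lem:grouped-deletions} in the case $s=1$. Take $r$ with $1\leq r\leq\pd R$ and set $j=t_r(R)$. By \eqref{eq:Hochster} there is $W\subseteq[n]$ with $|W|=j$ and $\widetilde H_{j-r-1}(\Delta[W])\neq0$. The goal is a set $W'\subseteq W$ with $|W'|\geq j-d$ and $\widetilde H_{|W'|-(r-1)-1}(\Delta[W'])\neq0$. Hochster's formula then gives $\beta_{r-1,|W'|}(R)\neq0$, so $t_{r-1}(R)\geq |W'|\geq j-d$, which is \eqref{eq:one-step}.

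To find $W'$, first dispose of the case $\Delta[W]$ is a full simplex: then it has no homology unless $W=\emptyset$, i.e.\ $j=0$, $r=1$, which is impossible since $t_1\geq1$. Otherwise $\Delta[W]$ contains some minimal nonface $F\in\NN(\Delta)$ with $F\subseteq W$ and $|F|\leq d$. Every face of $\Delta[W]$ misses a vertex of $F$, so
\[
 \Delta[W]=\bigcup_{v\in F}\Delta[W\setminus\{v\}]=\Dcal_{\Delta[W]}(F)
\]
in the notation of \eqref{eq:D-definition} with $s=1$, $E_1=F$. Applying Lemma~\ref{lem:grouped-deletions} with $q=j-r-1$ gives a nonempty $Q\subseteq F$ with
\[
 \widetilde H_{j-r-1-|Q|+1}(\Delta[W\setminus Q])\neq0 .
\]
Put $W'=W\setminus Q$, so $|W'|=j-|Q|$ and the homological degree is $|W'|-r=|W'|-(r-1)-1$, exactly as required. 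Since $|Q|\leq|F|\leq d$, we get $|W'|\geq j-d$.

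The only delicate point is the boundary bookkeeping. The reduced homology conventions ($\widetilde H_{-1}(\{\emptyset\})=\kk$) must make Hochster's formula valid for $i=r-1=0$: if $r=1$ the conclusion is $\widetilde H_{-1}(\Delta[W'])\neq0$, forcing $W'=\emptyset$ and recovering $t_0=0\geq j-d$, i.e.\ $t_1\leq d$. I expect the main care to go into confirming that the cover $\{\Delta[W\setminus\{v\}]\}_{v\in F}$ really equals $\Delta[W]$. This holds because $F$ is a nonface, so no face contains all of $F$. Minimality of $F$ is not even needed; only $|F|\leq d$ is used.
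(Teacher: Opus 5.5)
Your proof is correct, and it takes a different route from the paper, which gives no argument for this lemma and simply imports it from \cite[Corollary~4]{HS}.

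You derive it inside the paper's own framework:
\begin{enumerate}
\item A witness $W$ for $\beta_{r,t_r}\neq0$ cannot be a face, so it contains a minimal nonface $F$ with $|F|\leq d$.
\item The identity $\Delta[W]=\Dcal_{\Delta[W]}(F)$ holds simply because $F$ is a nonface.
\item Lemma~\ref{lem:grouped-deletions} with $s=1$, together with Hochster's formula, lowers the homological degree by one at a cost of $|Q|\leq|F|\leq d$ vertices.
\end{enumerate}

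This is exactly the mechanism of the $\Gamma_1$-case in the proof of Proposition~\ref{prop:recurrence}, with $F$ in place of $C$. Your version therefore makes Theorem~\ref{thm:main} depend only on Hochster's formula and Lemma~\ref{lem:iterated-MV}. That strengthens the paper's remark that its argument is independent of subadditivity-type results. It also gives a slightly sharper local statement: $t_r\leq t_{r-1}+|F|$ for any nonface $F$ contained in a witnessing $W$.

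What the citation buys is brevity and generality. The Herzog--Srinivasan bound is an algebraic statement for arbitrary monomial ideals, with $t_1$ in place of $d$, and needs no polarization. The paper polarizes anyway, so nothing is lost with your approach.

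One harmless slip in the degenerate case. If $W=\emptyset$, the only nonzero reduced homology is $\widetilde H_{-1}$. Since $j=0$, this would force $j-r-1=-1$, i.e.\ $r=0$, not $r=1$. Either way the case cannot occur for $r\geq1$.
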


\begin{lemma}\label{lem:intersecting-nonfaces}
Suppose that every minimal nonface of $\Delta$ has size at least $e\geq2$
and that $t_a(\kk[\Delta])<ea$ for some $2\leq a\leq\pd\kk[\Delta]$.
If $W\subseteq[n]$ and $\pd\kk[\Delta[W]]\geq a$, then $\Delta[W]$ has
two distinct intersecting minimal nonfaces.
\end{lemma}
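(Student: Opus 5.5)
The argument is by contradiction. Suppose every two distinct minimal nonfaces of $\Delta[W]$ are disjoint. We will show that this forces a nonzero Betti number $\beta_{a,j}(\kk[\Delta])$ with $j\geq ea$, which contradicts $t_a(\kk[\Delta])<ea$.

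\emph{Step 1: structure of $\Delta[W]$.} Let $F_1,\ldots,F_m$ be the minimal nonfaces of $\Delta[W]$. By assumption they are pairwise disjoint, and each has size at least $e$. Let $V\subseteq W$ be the set of vertices lying in no $F_i$. Then
\[
 \Delta[W]=\partial F_1*\cdots*\partial F_m*\langle V\rangle ,
\]
where $\partial F_i$ is the boundary of the simplex on $F_i$ and $\langle V\rangle$ is the full simplex on $V$. Indeed, a subset of $W$ is a face exactly when it contains no $F_i$. Equivalently, $I_{\Delta[W]}$ is generated by the regular sequence $x_{F_1},\ldots,x_{F_m}$ in disjoint variables, so $\kk[\Delta[W]]$ is resolved by a Koszul complex. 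In particular $\pd\kk[\Delta[W]]=m$, so the hypothesis gives $m\geq a$.

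\emph{Step 2: choose a witness.} Take $U=F_1\cup\cdots\cup F_a\subseteq W$. Then $\Delta[U]=\partial F_1*\cdots*\partial F_a$ is a join of $a$ spheres, one of dimension $|F_i|-2$ for each $i$. By the Künneth formula for joins it is a homology sphere of dimension $\sum_i(|F_i|-1)-2=|U|-a-1$. Hence
\[
 \widetilde H_{|U|-a-1}(\Delta[U])\cong\kk\neq0 .
\]

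\emph{Step 3: apply Hochster's formula.} The induced subcomplex of $\Delta$ on $U$ equals $\Delta[W][U]=\Delta[U]$. Hochster's formula \eqref{eq:Hochster} with $i=a$ and $j=|U|$ therefore gives $\beta_{a,|U|}(\kk[\Delta])\neq0$. Consequently
\[
 t_a(\kk[\Delta])\geq|U|=\sum_{i=1}^a|F_i|\geq ea ,
\]
a contradiction. So $\Delta[W]$ has two distinct minimal nonfaces that intersect.

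The only step with real content is Step 1, the identification of $\Delta[W]$ as a join of simplex boundaries and a simplex. The homology computation in Step 2 is standard. Alternatively, one can avoid topology entirely: the Koszul complex on $x_{F_1},\ldots,x_{F_a}$ is a summand-compatible restriction of the resolution, so $\beta_{a,\sum|F_i|}\neq0$ directly. Note that the hypothesis $a\geq2$ is not needed for this argument.
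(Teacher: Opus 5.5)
Your proof is correct and is essentially the paper's argument: assume the minimal nonfaces of $\Delta[W]$ are pairwise disjoint, take $U=F_1\cup\cdots\cup F_a$, and use Hochster's formula to get $\beta_{a,|U|}(\kk[\Delta])\neq0$, so $t_a\geq|U|\geq ea$; the only difference is that you certify $\widetilde H_{|U|-a-1}(\Delta[U])\neq0$ topologically (join of simplex boundaries), whereas the paper reads it off the Koszul resolution of the complete intersection $\kk[\Delta[U]]$. One harmless slip: a join of spheres of dimensions $|F_i|-2$ has dimension $\sum_i(|F_i|-1)-1$, not $\sum_i(|F_i|-1)-2$, though your final value $|U|-a-1$ is correct.
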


\begin{proof}
Set
\[
 \Gamma=\Delta[W].
\]
Let $F_1,\ldots,F_m$ be the minimal nonfaces of $\Gamma$.
The Stanley--Reisner ideal of $\Gamma$ has $m$ minimal generators, so its
Taylor resolution has length $m$.  Hence
\[
 \pd\kk[\Gamma]\leq m.
\]
Since $\pd\kk[\Gamma]\geq a$, we have $m\geq a$.

Assume that the minimal nonfaces of $\Gamma$ are pairwise disjoint.
Choose $a$ of them, say $F_1,\ldots,F_a$, and set
\[
 U=F_1\cup\cdots\cup F_a.
\]
Because all minimal nonfaces of $\Gamma$ are pairwise disjoint, no other
minimal nonface of $\Gamma$ is contained in $U$.  Therefore
\[
 I_{\Delta[U]}
 =
 I_{\Gamma[U]}
 =
 (x_{F_1},\ldots,x_{F_a}).
\]
The sets $F_1,\ldots,F_a$ are disjoint, so the monomials
$x_{F_1},\ldots,x_{F_a}$ have disjoint supports.  Hence they form a
regular sequence.

Thus $\kk[\Delta[U]]$ is a complete intersection.  Its Koszul resolution
has a nonzero last free module
\[
 S_U(-|U|),
 \qquad
 |U|=|F_1|+\cdots+|F_a|,
\]
where $S_U=\kk[x_i:i\in U]$.  In particular,
\[
 \beta^{S_U}_{a,|U|}(\kk[\Delta[U]])\neq0.
\]
Since $U$ is the full vertex set of $\Delta[U]$, Hochster's formula gives
\[
 \widetilde H_{|U|-a-1}(\Delta[U])\neq0.
\]
Applying Hochster's formula again, now to $\Delta$, gives
\[
 \beta^S_{a,|U|}(\kk[\Delta])\neq0.
\]
Therefore
\[
 t_a(\kk[\Delta])\geq |U|.
\]
Finally, every $F_i$ has size at least $e$, so
\[
 |U|=\sum_{i=1}^a |F_i|\geq ae.
\]
Hence
\[
 t_a(\kk[\Delta])\geq ae,
\]
contrary to the assumption $t_a(\kk[\Delta])<ae$.

Thus $\Gamma=\Delta[W]$ must contain two distinct intersecting minimal
nonfaces.
\end{proof}

\begin{proposition}\label{prop:recurrence}
Let $R=\kk[\Delta]$ and $p=\pd R$.  Suppose that the minimal nonfaces
of $\Delta$ have sizes between $e$ and $d$, where $2\leq e\leq d$,
and that $t_a(R)<ea$ for some $2\leq a\leq p$.  Then, for every
$a+2\leq r\leq p$,
\begin{equation}\label{eq:recurrence}
 t_r(R)\leq
 \max\left\{
 t_{r-1}(R)+d-1,\,
 t_{r-2}(R)+2d-1
 \right\}.
\end{equation}
In particular,
\begin{equation}\label{eq:two-step}
 t_r(R)\leq t_{r-2}(R)+2d-1.
\end{equation}
\end{proposition}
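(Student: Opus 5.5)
We translate the inequality into homology through Hochster's formula and then delete vertices with the help of Lemma~\ref{lem:grouped-deletions}. Fix $r$ with $a+2\leq r\leq p$, and let $j=t_r(R)$. By \eqref{eq:Hochster} there is $W\subseteq[n]$ with $|W|=j$ and $\widetilde H_{j-r-1}(\Gamma)\neq0$, where $\Gamma=\Delta[W]$. Because $\beta^{S_W}_{r,j}(\kk[\Gamma])\neq0$, we have $\pd\kk[\Gamma]\geq r\geq a$. Lemma~\ref{lem:intersecting-nonfaces} then supplies two distinct minimal nonfaces $F,G$ of $\Gamma$ with $F\cap G\neq\emptyset$. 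Their union satisfies $|F\cup G|\leq 2d-1$.

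The key step is to cover $\Gamma$ by deletions.

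\emph{Case 1: $F\cup G\neq W$.} Since $\widetilde H_{j-r-1}(\Gamma)\neq0$, every vertex of $W$ lies in a facet, so $\Gamma$ is not a cone over any vertex.

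\emph{Case 2: $F\cup G=W$.} Here $j\leq 2d-1$. This is impossible for $r\geq a+2\geq4$: the Taylor bound gives $t_r\geq$ roughly $r$ minimal nonfaces, which must fit inside $W$.

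The cleaner route is to note that every face of $\Gamma$ omits a vertex of $F$, because $F$ is a nonface. Hence $\Gamma=\Dcal_\Gamma(F)$. I would refine this to $\Gamma=\Dcal_\Gamma(E_1,E_2)$ with $E_1=F\cap G$ and $E_2=G\setminus F$, but that equality fails, since a face may omit a vertex of $F\setminus G$ and contain all of $G\setminus F$. I would instead use
\[
 \Gamma=\Dcal_\Gamma(F)=\bigcup_{v\in F}\Gamma[W\setminus\{v\}]
\]
and apply the case $s=1$ of Lemma~\ref{lem:grouped-deletions}. This gives a nonempty $Q\subseteq F$ with $\widetilde H_{j-r-|Q|}(\Gamma[W\setminus Q])\neq0$. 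By Hochster's formula this is a Betti number $\beta_{r-1,\,j-|Q|}(R)$, and it yields only $t_r\leq t_{r-1}+|Q|$.

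To do better, we split on $Q$. If $Q\neq F$, then $|Q|\leq d-1$ and $t_r\leq t_{r-1}+d-1$. If $Q=F$, we work inside $\Gamma'=\Gamma[W\setminus F]$. This complex still carries nonzero homology in degree $j-r-|F|$, and we must delete further using $G$. The set $G\setminus F$ is nonempty, but it is not a nonface of $\Gamma'$. So the second step should be carried out before taking intersections.

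Concretely, we write $\Gamma=\bigcup_{v\in F}\Gamma[W\setminus\{v\}]$. For $v\in F\cap G$, the piece $\Gamma[W\setminus\{v\}]$ is treated directly. For $v\in F\setminus G$, the piece still contains $G$ as a nonface, so it is itself the union over $w\in G$, and only $w\in G\setminus F$ give new deletions. Thus $\Gamma$ is covered by the sets $\Gamma[W\setminus\{v\}]$ for $v\in F\cap G$ and $\Gamma[W\setminus\{v,w\}]$ for $v\in F\setminus G$, $w\in G\setminus F$.

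We apply Lemma~\ref{lem:iterated-MV} to this cover. An intersection over a family $J$ is $\Gamma[W\setminus Q]$, where $Q$ is the union of the deleted sets. This gives $\widetilde H_{j-r-|J|}(\Gamma[W\setminus Q])\neq0$, hence
\[
 t_{r-c}\geq j-|Q|,
\]
where the homological index $r-c$ is computed via Hochster's formula. Finally we bound $|Q|$ against $|J|$ using $|F\cup G|\leq 2d-1$ and $|F|,|G|\leq d$. If the drop in homological index is at least $2$, we get $t_r\leq t_{r-2}+2d-1$. Otherwise we get $t_r\leq t_{r-1}+d-1$, or the drop is larger and we conclude with Lemma~\ref{lem:one-step}.

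The bookkeeping relating $|J|$ to $|Q|$ is the main obstacle. The cover pieces with one or two deleted vertices overlap, so one must check in each pattern of $J$ that the deficit $|Q|-(\text{index drop})$ is at most $d-1$ per step, or at most $2d-1$ over two steps. The two-step inequality \eqref{eq:two-step} then follows by combining with Lemma~\ref{lem:one-step}, since $t_{r-1}+d-1\leq t_{r-2}+2d-1$.
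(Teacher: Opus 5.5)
Your cover is correct: a face of $\Gamma$ either omits a vertex of $C=F\cap G$, or contains $C$ and then omits some $v\in A=F\setminus G$ and some $w\in B=G\setminus F$. There is a genuine gap, though. Applying Lemma~\ref{lem:iterated-MV} to this flat cover cannot give the bound, and the ``bookkeeping'' you postpone is exactly where the argument fails.

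The lemma only guarantees \emph{some} family $J$ with $\widetilde H_{q-|J|+1}(\Gamma[W\setminus Q])\neq0$, where $q=|W|-r-1$. Your proof must work for whatever $J$ it returns. Many pairs have the same union, so $|J|$ is not controlled by $|Q|$. Suppose $J$ consists of $k$ vertices of $C$ and $m$ pairs covering $Q_A\subseteq A$ and $Q_B\subseteq B$. Then the homological drop is $|Q|-|J|+1=|Q_A|+|Q_B|-m+1$, and $m$ can be as large as $|Q_A||Q_B|$.

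Two examples show the failure. Take $d=3$, $F=\{c,a_1,a_2\}$, $G=\{c,b_1,b_2\}$, and let $J$ be the four pair pieces. The drop is $1$ with $|Q|=4$, so you only get $t_r\le t_{r-1}+4$. This is not implied by $\max\{t_{r-1}+2,\,t_{r-2}+5\}$. If $|A|=|B|=3$ and $J$ is all nine pairs, the drop is $-2$, and you get no upper bound on $t_r$ at all.

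Your fallback for drops larger than $2$ also points the wrong way. Lemma~\ref{lem:one-step} bounds $t_{r-2}$ from \emph{above} by $t_{r-c}$. Turning $t_r\le t_{r-c}+|Q|$ into a bound in terms of $t_{r-2}$ would need a \emph{lower} bound on $t_{r-2}$.

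The missing idea is to group the deletions before using Mayer--Vietoris. Bundle your pieces as $\Gamma=\Gamma_1\cup\Gamma_2$ with $\Gamma_1=\Dcal_\Gamma(C)$ and $\Gamma_2=\Dcal_\Gamma(A,B)$. Observe that $\Gamma_1\cap\Gamma_2=\Dcal_\Gamma(C,A,B)$, and apply the two-term Mayer--Vietoris sequence once. Then use Lemma~\ref{lem:grouped-deletions}. Its nested induction makes the homological shift equal to the number $s$ of groups, not the number of cover members. This gives three cases:
\begin{itemize}
\item If $\widetilde H_q(\Gamma_1)\neq0$, you get $Q\subseteq C$ with drop $1$ and $|Q|\le d-1$.
\item If $\widetilde H_q(\Gamma_2)\neq0$, you get $Q$ meeting $A$ and $B$ with drop $2$ and $|Q|\le 2d-2$.
\item If $\widetilde H_{q-1}(\Gamma_1\cap\Gamma_2)\neq0$, you get $Q$ meeting $A$, $B$, $C$ with drop $2$ and $|Q|\le|F\cup G|\le 2d-1$.
\end{itemize}
This proves \eqref{eq:recurrence} directly, and \eqref{eq:two-step} then follows from Lemma~\ref{lem:one-step} as you say.

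Your attempted refinement $\Dcal_\Gamma(F\cap G,\,G\setminus F)$ was close: the fix is a union of two $\Dcal$'s rather than a single one. The preliminary Case~1/Case~2 discussion and the $Q=F$ branch are not needed and should be dropped.
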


\begin{proof}
Choose $W\subseteq[n]$ with $|W|=t_r(R)$ such that
\[
 \widetilde H_j(\Gamma)\neq0,
 \qquad
 \Gamma=\Delta[W],
 \qquad
 j=|W|-r-1.
\]
By Hochster's formula,
\[
 \beta_{r,|W|}(R)\neq0
 \qquad\text{and}\qquad
 \pd\kk[\Gamma]\geq r\geq a.
\]
Hence Lemma~\ref{lem:intersecting-nonfaces} gives two distinct
intersecting minimal nonfaces $F,G$ of $\Gamma$.

Set
\[
 C=F\cap G,\qquad
 A=F\setminus G,\qquad
 B=G\setminus F.
\]
Since $F$ and $G$ are distinct minimal nonfaces, $A,B,C$ are nonempty.
Also,
\[
 |C|\leq d-1,\qquad
 |A|+|B|\leq2d-2,\qquad
 |A|+|B|+|C|\leq2d-1.
\]
Indeed,
\[
 |A|+|B|+|C|
 =|F\cup G|
 =|F|+|G|-|C|
 \leq2d-1.
\]

Set
\[
 \Gamma_1=\Dcal_\Gamma(C),
 \qquad
 \Gamma_2=\Dcal_\Gamma(A,B).
\]
We have
\[
 \Gamma=\Gamma_1\cup\Gamma_2.
\]
Indeed, if $\sigma\in\Gamma$ and $C\nsubseteq\sigma$, then
$\sigma\in\Gamma_1$.  If $C\subseteq\sigma$, then
$A\nsubseteq\sigma$ and $B\nsubseteq\sigma$, since
$F=A\cup C$ and $G=B\cup C$ are nonfaces.  Hence
$\sigma\in\Gamma_2$.  Also,
\[
 \Gamma_1\cap\Gamma_2=\Dcal_\Gamma(C,A,B).
\]

Since $\widetilde H_j(\Gamma)\neq0$, Mayer--Vietoris shows that at least
one of
\[
 \widetilde H_j(\Gamma_1),\qquad
 \widetilde H_j(\Gamma_2),\qquad
 \widetilde H_{j-1}(\Gamma_1\cap\Gamma_2)
\]
is nonzero.

If $\widetilde H_j(\Gamma_1)\neq0$, then
Lemma~\ref{lem:grouped-deletions} gives a nonempty $Q\subseteq C$ such that
\[
 \widetilde H_{j-|Q|+1}
 \bigl(\Delta[W\setminus Q]\bigr)\neq0.
\]
Since
\[
 (|W|-|Q|)-(j-|Q|+1)-1=r-1,
\]
Hochster's formula gives
\[
 \beta_{r-1,\,|W|-|Q|}(R)\neq0.
\]
Thus
\[
 t_r(R)=|W|
 \leq t_{r-1}(R)+|Q|
 \leq t_{r-1}(R)+d-1.
\]

If $\widetilde H_j(\Gamma_2)\neq0$, then there are nonempty
$Q_A\subseteq A$ and $Q_B\subseteq B$.  For
$Q=Q_A\cup Q_B$,
\[
 \widetilde H_{j-|Q|+2}
 \bigl(\Delta[W\setminus Q]\bigr)\neq0.
\]
Now
\[
 (|W|-|Q|)-(j-|Q|+2)-1=r-2,
\]
so
\[
 \beta_{r-2,\,|W|-|Q|}(R)\neq0.
\]
Therefore
\[
 t_r(R)
 \leq t_{r-2}(R)+|Q|
 \leq t_{r-2}(R)+2d-2.
\]

Finally, suppose that
\[
 \widetilde H_{j-1}(\Gamma_1\cap\Gamma_2)\neq0.
\]
Since
\[
 \Gamma_1\cap\Gamma_2=\Dcal_\Gamma(C,A,B),
\]
Lemma~\ref{lem:grouped-deletions} gives a set
$Q\subseteq A\cup B\cup C$, meeting each of $A,B,C$, such that
\[
 \widetilde H_{j-|Q|+2}
 \bigl(\Delta[W\setminus Q]\bigr)\neq0.
\]
The corresponding homological degree is again $r-2$.  Hence
\[
 t_r(R)
 \leq t_{r-2}(R)+|Q|
 \leq t_{r-2}(R)+2d-1.
\]

Thus
\[
 t_r(R)\leq
 \max\{t_{r-1}(R)+d-1,\ t_{r-2}(R)+2d-1\},
\]
which proves \eqref{eq:recurrence}.

Finally, Lemma~\ref{lem:one-step} gives
\[
 t_{r-1}(R)\leq t_{r-2}(R)+d.
\]
Hence
\[
 t_{r-1}(R)+d-1\leq t_{r-2}(R)+2d-1,
\]
and \eqref{eq:two-step} follows.
\end{proof}

\begin{proof}[Proof of Theorem~\ref{thm:main}]
By polarization, we may assume that $I$ is squarefree.

Let $b\geq0$ and suppose that $a+b\leq p$.

If $b=2q$, then repeated use of \eqref{eq:two-step} gives
\[
 t_{a+2q}(R)\leq t_a(R)+q(2d-1)
 =
 t_a(R)+
 \left\lceil\frac{(2d-1)b}{2}\right\rceil.
\]

If $b=2q+1$, then
\[
 t_{a+2q}(R)\leq t_a(R)+q(2d-1),
\]
and Lemma~\ref{lem:one-step} gives
\[
 \begin{aligned}
 t_{a+2q+1}(R)
 &\leq t_{a+2q}(R)+d\\
 &\leq t_a(R)+q(2d-1)+d\\
 &=
 t_a(R)+
 \left\lceil\frac{(2d-1)b}{2}\right\rceil.
 \end{aligned}
\]

Thus \eqref{eq:main-intro} holds for every $b\geq0$ with
$a+b\leq p$.
\end{proof}

\section{Consequences}

Throughout this section, let $2\leq e\leq d$ be integers and let
$I\subseteq S$ be a monomial ideal with minimal generator degrees
between $e$ and $d$.
Put $R=S/I$, $p=\pd_S R$, $t_i=t_i(R)$, and $\delta_i=di-t_i$.
Thus $t_a<ae$ is equivalent to $\delta_a>a(d-e)$.

\begin{corollary}\label{cor:defect}
If $t_a<ae$ for some $2\leq a\leq p$, then, for $0\leq b\leq p-a$,
\begin{equation}\label{eq:defect-growth}
 \delta_{a+b}\geq\delta_a+\left\lfloor\frac b2\right\rfloor
 \geq a(d-e)+1+\left\lfloor\frac b2\right\rfloor.
\end{equation}
In particular,
\begin{equation}\label{eq:mixed-shift-bound}
 t_{a+b}\leq ae+db-1-\left\lfloor\frac b2\right\rfloor.
\end{equation}
\end{corollary}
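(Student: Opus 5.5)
The plan is to read the corollary directly off Theorem~\ref{thm:main}, rewritten in terms of the defects $\delta_i=di-t_i$. The only ingredients are the theorem, the integrality of $t_a$, and an elementary identity for ceilings.

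First, apply Theorem~\ref{thm:main} with the given $a$ and any $0\leq b\leq p-a$:
\[
 t_{a+b}\leq t_a+\left\lceil\frac{(2d-1)b}{2}\right\rceil .
\]
Then simplify the ceiling. Since $db$ is an integer,
\[
 \left\lceil\frac{(2d-1)b}{2}\right\rceil=db+\left\lceil-\frac b2\right\rceil=db-\left\lfloor\frac b2\right\rfloor .
\]
This gives
\[
 t_{a+b}\leq t_a+db-\left\lfloor\frac b2\right\rfloor .
\]
Subtracting both sides from $d(a+b)$ yields
\[
 \delta_{a+b}\geq da-t_a+\left\lfloor\frac b2\right\rfloor=\delta_a+\left\lfloor\frac b2\right\rfloor ,
\]
which is the first inequality in \eqref{eq:defect-growth}.

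For the second inequality, use that $t_a$ is an integer with $t_a<ae$, so $t_a\leq ae-1$. Hence
\[
 \delta_a=da-t_a\geq da-ae+1=a(d-e)+1 .
\]
Finally, \eqref{eq:mixed-shift-bound} follows either by substituting $t_a\leq ae-1$ into the displayed bound for $t_{a+b}$, or by rewriting $\delta_{a+b}\geq a(d-e)+1+\lfloor b/2\rfloor$ as an upper bound on $t_{a+b}=d(a+b)-\delta_{a+b}$.

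There is no genuine obstacle here. The only point needing care is the ceiling identity, and it is checked by cases: for $b=2q$ both sides equal $db-q$, and for $b=2q+1$ one has $(2d-1)b/2=db-q-\tfrac12$, whose ceiling is $db-q$.
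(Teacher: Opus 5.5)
Your proof is correct and matches the paper's argument step for step: apply Theorem~\ref{thm:main}, rewrite $\left\lceil\frac{(2d-1)b}{2}\right\rceil=db-\left\lfloor\frac b2\right\rfloor$, subtract from $d(a+b)$, and use integrality $t_a\leq ae-1$ to get $\delta_a\geq a(d-e)+1$ and the mixed shift bound. The only addition is that you verify the ceiling identity by parity cases, which the paper simply asserts.
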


\begin{proof}
By Theorem~\ref{thm:main},
\[
 t_{a+b}\leq t_a+
 \left\lceil\frac{(2d-1)b}{2}\right\rceil.
\]
Since
\[
 \left\lceil\frac{(2d-1)b}{2}\right\rceil
 =db-\left\lfloor\frac b2\right\rfloor,
\]
we obtain
\[
 \begin{aligned}
 \delta_{a+b}
 &=d(a+b)-t_{a+b}\\
 &\geq da-t_a+\left\lfloor\frac b2\right\rfloor\\
 &=\delta_a+\left\lfloor\frac b2\right\rfloor.
 \end{aligned}
\]
Moreover, $t_a<ae$, hence
\[
 \delta_a=da-t_a
 \geq da-ae+1
 =a(d-e)+1.
\]
This proves \eqref{eq:defect-growth}.

Finally,
\[
 \begin{aligned}
 t_{a+b}
 &=d(a+b)-\delta_{a+b}\\
 &\leq d(a+b)-a(d-e)-1-\left\lfloor\frac b2\right\rfloor\\
 &=ae+db-1-\left\lfloor\frac b2\right\rfloor,
 \end{aligned}
\]
which gives \eqref{eq:mixed-shift-bound}.
\end{proof}

\begin{corollary}\label{cor:second-shift}
If $p\geq2$ and $t_2<2e$, then
\begin{equation}\label{eq:second-shift-bound}
 t_i\leq
 \left\lceil\frac{(2d-1)i}{2}\right\rceil-2(d-e)
 \qquad(2\leq i\leq p).
\end{equation}
\end{corollary}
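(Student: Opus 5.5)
Apply Corollary~\ref{cor:defect} (equivalently Theorem~\ref{thm:main}) with $a=2$. The hypotheses $p\geq 2$ and $t_2<2e$ are exactly what is needed there, and for $2\leq i\leq p$ we put $b=i-2$, so $0\leq b\leq p-2$. Inequality \eqref{eq:mixed-shift-bound} then gives
\[
 t_i\leq 2e+d(i-2)-1-\left\lfloor\frac{i-2}{2}\right\rfloor .
\]
The rest of the argument only rewrites this right-hand side in the form claimed.

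Since $\lfloor (i-2)/2\rfloor=\lfloor i/2\rfloor-1$, the bound becomes
\[
 t_i\leq di-\left\lfloor\frac i2\right\rfloor-2d+2e .
\]
We use the identity $\lceil (2d-1)i/2\rceil=di-\lfloor i/2\rfloor$, already used in the proof of Corollary~\ref{cor:defect}. It holds because $\lceil -i/2\rceil=-\lfloor i/2\rfloor$. With it the right-hand side equals $\lceil(2d-1)i/2\rceil-2(d-e)$, which is \eqref{eq:second-shift-bound}.

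The only point needing care is the floor bookkeeping when $i$ is odd. Take $i=2k+1$. Then $b=2k-1$, so $\lfloor b/2\rfloor=k-1$ and $\lfloor i/2\rfloor=k$, and the shift by one matches. The even case is immediate. Nothing else needs checking, since all the hypotheses of Theorem~\ref{thm:main} are met directly: $a=2$ lies in the allowed range, and $t_2<2e$ is precisely the condition $t_a<ea$.
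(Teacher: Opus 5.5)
Your proposal is correct and follows the paper's proof: it specializes the mixed bound of Corollary~\ref{cor:defect} to $a=2$, $b=i-2$, and then rewrites the right-hand side using $\lfloor (i-2)/2\rfloor=\lfloor i/2\rfloor-1$ and $\lceil (2d-1)i/2\rceil=di-\lfloor i/2\rfloor$. Your separate check of the odd case is harmless but not needed, since the identity $\lfloor (i-2)/2\rfloor=\lfloor i/2\rfloor-1$ already holds for every integer $i$.
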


\begin{proof}
Take $a=2$ and $b=i-2$ in \eqref{eq:mixed-shift-bound}.  Then
\[
 t_i\leq
 2e+d(i-2)-1-\left\lfloor\frac{i-2}{2}\right\rfloor.
\]
Since
\[
 \left\lfloor\frac{i-2}{2}\right\rfloor
 =\left\lfloor\frac i2\right\rfloor-1,
\]
this becomes
\[
 \begin{aligned}
 t_i
 &\leq di-2(d-e)-\left\lfloor\frac i2\right\rfloor\\
 &=\left\lceil\frac{(2d-1)i}{2}\right\rceil-2(d-e).
 \end{aligned}
\]
\end{proof}

\Needspace{8\baselineskip}
Recall that
\[
 \reg R=\max_{0\leq i\leq p}\{t_i-i\}
\]

\begin{corollary}\label{cor:regularity}
If $t_a<ae$ for some $2\leq a\leq p$, then
\begin{equation}\label{eq:regularity-refined}
 \reg R\leq\max\left\{
 (d-1)(a-1),\ 
 (d-1)p-\delta_a-\left\lfloor\frac{p-a}{2}\right\rfloor
 \right\}.
\end{equation}
In particular,
\begin{equation}\label{eq:regularity-simple}
 \reg R\leq\max\left\{
 (d-1)(a-1),\
 (d-1)p-a(d-e)-1-\left\lfloor\frac{p-a}{2}\right\rfloor
 \right\}.
\end{equation}
\end{corollary}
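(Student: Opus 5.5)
We split the range $0\leq i\leq p$ of indices in $\reg R=\max_i\{t_i-i\}$ at $i=a$, and use the Taylor bound for small $i$ and Corollary~\ref{cor:defect} for large $i$.

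\emph{Small indices, $0\leq i\leq a-1$.} The Taylor bound \eqref{eq:Taylor-bound-intro} gives $t_i-i\leq(d-1)i\leq(d-1)(a-1)$. This is the first term of the maximum; $i=0$ gives $0$ and is also covered.

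\emph{Large indices, $i=a+b$ with $0\leq b\leq p-a$.} By Corollary~\ref{cor:defect},
\[
 t_{a+b}=d(a+b)-\delta_{a+b}\leq d(a+b)-\delta_a-\left\lfloor\frac b2\right\rfloor,
\]
so
\[
 t_{a+b}-(a+b)\leq (d-1)(a+b)-\delta_a-\left\lfloor\frac b2\right\rfloor .
\]
We must show this is at most $(d-1)p-\delta_a-\lfloor (p-a)/2\rfloor$. Put $c=p-a-b\geq0$. The required inequality becomes
\[
 \left\lfloor\frac{p-a}{2}\right\rfloor-\left\lfloor\frac b2\right\rfloor\leq (d-1)c.
\]
Since $\lfloor (b+c)/2\rfloor-\lfloor b/2\rfloor\leq\lceil c/2\rceil\leq c\leq (d-1)c$ for $d\geq2$, the right-hand side $(d-1)(a+b)-\lfloor b/2\rfloor$ is nondecreasing in $b$. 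Hence the maximum over $b$ is attained at $b=p-a$, which yields the second term of \eqref{eq:regularity-refined}.

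\emph{The simplified bound.} Combining the two ranges proves \eqref{eq:regularity-refined}. Substituting $\delta_a\geq a(d-e)+1$, also from Corollary~\ref{cor:defect}, gives \eqref{eq:regularity-simple}.

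The only delicate point is the monotonicity in $b$ with the floor functions, and this is handled by the elementary estimate above. Everything else is direct substitution.
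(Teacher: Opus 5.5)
Your proposal is correct and follows the paper's proof: the Taylor bound handles $i<a$, Corollary~\ref{cor:defect} handles $i=a+b$, the resulting bound is maximized at $b=p-a$, and substituting $\delta_a\geq a(d-e)+1$ gives the simplified bound. The only cosmetic difference is that you compare directly with the endpoint $b=p-a$ via the floor estimate $\lfloor (b+c)/2\rfloor-\lfloor b/2\rfloor\leq\lceil c/2\rceil$, whereas the paper observes that each unit increase of $b$ raises the bound by $d-1$ or $d-2$.
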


\begin{proof}
First let $0\leq i<a$.  The Taylor bound gives
\[
 t_i\leq di,
\]
and therefore
\[
 t_i-i\leq(d-1)i\leq(d-1)(a-1).
\]

Now let $i=a+b$, where $0\leq b\leq p-a$.
By Corollary~\ref{cor:defect},
\[
 \delta_{a+b}\geq
 \delta_a+\left\lfloor\frac b2\right\rfloor.
\]
Hence
\[
 \begin{aligned}
 t_i-i
 &=d(a+b)-\delta_{a+b}-(a+b)\\
 &\leq
 (d-1)(a+b)-\delta_a-\left\lfloor\frac b2\right\rfloor.
 \end{aligned}
\]
The right-hand side is nondecreasing in $b$.  Indeed, when $b$ increases
by one, it increases by either $d-1$ or $d-2$, both nonnegative.
Thus its largest value occurs at $b=p-a$, and so
\[
 t_i-i\leq
 (d-1)p-\delta_a-\left\lfloor\frac{p-a}{2}\right\rfloor
\]
for every $a\leq i\leq p$.

Combining the cases $i<a$ and $i\geq a$ gives
\eqref{eq:regularity-refined}.
Finally,
\[
 \delta_a\geq a(d-e)+1
\]
by Corollary~\ref{cor:defect}, and substitution gives
\eqref{eq:regularity-simple}.
\end{proof}

Theorem~\ref{thm:main} and ordinary subadditivity \cite{ABHMW} give the
following bound.

\begin{corollary}\label{cor:min-bound}
If $t_a<ae$ for some $2\leq a\leq p$, then
\begin{equation}\label{eq:min-bound}
 t_{a+b}\leq t_a+
 \min\left\{
 t_b,\ 
 \left\lceil\frac{(2d-1)b}{2}\right\rceil
 \right\}
 \qquad(0\leq b\leq p-a).
\end{equation}
\end{corollary}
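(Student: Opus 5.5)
The bound in Corollary~\ref{cor:min-bound} is simply the minimum of two separate upper bounds for $t_{a+b}$, so the plan is to establish each one on its own and then take the smaller.

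Fix $b$ with $0\leq b\leq p-a$.

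\emph{First bound.} Apply Theorem~\ref{thm:main} directly: the hypotheses $2\leq e\leq d$, $t_a<ae$ and $a+b\leq p$ are exactly those of the theorem. This gives
\[
 t_{a+b}\leq t_a+\left\lceil\frac{(2d-1)b}{2}\right\rceil.
\]

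\emph{Second bound.} Invoke ordinary subadditivity for monomial ideals \cite{ABHMW}, namely $t_{u+v}(R)\leq t_u(R)+t_v(R)$, with $u=a$ and $v=b$. This gives
\[
 t_{a+b}\leq t_a+t_b.
\]
For $b=0$ this is trivial, since $t_0=0$. For $b\geq1$ the indices $a$, $b$ and $a+b$ all lie in $[0,p]$, so every shift involved is defined. I would note explicitly that \cite{ABHMW} applies to arbitrary monomial ideals and requires no polarization or degree hypothesis.

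Since both bounds hold for the same $t_{a+b}$, the inequality $t_{a+b}\leq t_a+\min\{\cdot,\cdot\}$ follows immediately. There is no genuine obstacle here. The only point to check is that the quoted subadditivity statement covers exactly the index range $0\leq b\leq p-a$, which is where its convention that $t_i$ is defined for $i\leq\pd R$ matters.
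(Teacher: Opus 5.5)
Your proposal is correct and is essentially the paper's proof. The paper treats $b=0$ as trivial, takes one bound from ordinary subadditivity \cite{ABHMW} and the other from Theorem~\ref{thm:main}, and then takes the minimum.
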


\begin{proof}
For $b=0$, the statement is clear.  Let $b\geq1$.
Ordinary subadditivity gives
\[
 t_{a+b}\leq t_a+t_b,
\]
while Theorem~\ref{thm:main} gives
\[
 t_{a+b}\leq
 t_a+\left\lceil\frac{(2d-1)b}{2}\right\rceil.
\]
Taking the minimum gives \eqref{eq:min-bound}.
\end{proof}

\section{Examples and sharpness}

\begin{example}\label{ex:mixed-degree}
Let
\[
 I=x_1x_2(x_3,x_4,x_5x_6,x_5x_7)
 \subseteq \kk[x_1,\ldots,x_7].
\]
The minimal generator degrees are $3,3,4,4$, so $e=3$ and $d=4$.

Set
\[
 J=(x_3,x_4,x_5x_6,x_5x_7).
\]
Since the variable sets are disjoint,
\[
 S/J\cong
 \frac{\kk[x_3,x_4]}{(x_3,x_4)}
 \otimes_\kk
 \frac{\kk[x_5,x_6,x_7]}{(x_5x_6,x_5x_7)}
\]
up to adjoining the variables $x_1,x_2$.
The two factors have maximal shifts $(0,1,2)$ and $(0,2,3)$.
Thus
\[
 (t_0(S/J),\ldots,t_4(S/J))=(0,2,3,4,5).
\]
Since $I=x_1x_2J$, multiplication by $x_1x_2$ shifts the positive
Betti degrees by $2$.  Hence
\[
 (t_1(S/I),t_2(S/I),t_3(S/I),t_4(S/I))
 =(4,5,6,7).
\]

In particular,
\[
 t_2(S/I)=5<6=2e.
\]
Corollary~\ref{cor:second-shift} gives
\[
 t_i(S/I)\leq
 \left\lceil\frac{7i}{2}\right\rceil-2
 \qquad(2\leq i\leq4),
\]
whereas the Taylor bound is
\[
 t_i(S/I)\leq4i.
\]
\end{example}

\begin{example}\label{ex:sharp-quadratic}
Let
\[
 G_0=C_5\sqcup P_3,
\]
where
\[
 E(C_5)=\{12,23,34,45,15\},
 \qquad
 E(P_3)=\{67,78\}.
\]
Let
\[
 R_0=S_0/I(G_0),
 \qquad
 S_0=\kk[x_1,\ldots,x_8].
\]

The maximal shifts of the edge-ideal quotients of $C_5$ and $P_3$ are
\[
 (0,2,3,5)
 \qquad\text{and}\qquad
 (0,2,3),
\]
respectively.
Since the two graphs have disjoint vertex sets, tensoring their minimal
resolutions gives
\begin{equation}\label{eq:base-quadratic-shifts}
 (t_0(R_0),\ldots,t_5(R_0))
 =(0,2,4,5,7,8).
\end{equation}

For $s\geq0$, let $M_s$ be a matching of $s$ edges on new vertices,
and set
\[
 G_s=G_0\sqcup M_s,
 \qquad
 R_s=S_s/I(G_s).
\]
The edge ideal of $M_s$ is a quadratic complete intersection.  Hence
tensoring minimal resolutions gives
\[
 \pd_{S_s}R_s=s+5
\]
and
\[
 t_i(R_s)=
 \max_{\substack{0\leq k\leq5\\0\leq i-k\leq s}}
 \{t_k(R_0)+2(i-k)\}.
\]
Since
\[
 (t_k(R_0)-2k)_{k=0}^5=(0,0,0,-1,-1,-2),
\]
we obtain
\begin{equation}\label{eq:quadratic-family-shifts}
 t_i(R_s)=
 \begin{cases}
  2i,   &0\leq i\leq s+2,\\
  2i-1, &i=s+3,s+4,\\
  2i-2, &i=s+5.
 \end{cases}
\end{equation}

Set
\[
 a=s+3,
 \qquad
 N=s+5.
\]
Then
\[
 t_a(R_s)=2a-1<2a
\]
and
\[
 t_N(R_s)
 =2N-2
 =t_a(R_s)+3.
\]
Thus the constant $3=2d-1$ in the two-step bound is sharp for $d=2$.

Finally, let
\[
 N=i_1+\cdots+i_\ell,
 \qquad
 i_j\geq1,\quad \ell\geq2.
\]
For $i<N$, the only positive defects
\[
 2i-t_i(R_s)
\]
occur at $i=s+3$ and $i=s+4$, and both are equal to $1$.
Since
\[
 2(s+3)>N,
\]
at most one part $i_j$ has positive defect.  Therefore
\[
 \sum_{j=1}^{\ell}t_{i_j}(R_s)\geq2N-1.
\]
On the other hand,
\[
 t_N(R_s)=2N-2.
\]
Hence our bound for the last shift is smaller than every bound obtained
by repeated use of ordinary subadditivity.
\end{example}

\begin{question}
For $d\geq3$, is the constant $2d-1$ in
\[
 t_r\leq t_{r-2}+2d-1
\]
best possible?
\end{question}

\end{document}